\newtheorem{theorem}{Theorem}
\newtheorem{proposition}[theorem]{Proposition}
\newtheorem{lemma}[theorem]{Lemma}
\newtheorem{corollary}[theorem]{Corollary}
\newtheorem{remark}[theorem]{Remark}
\theoremstyle{definition}
\newtheorem{definition}[theorem]{Definition}
\newtheorem{example}[theorem]{Example}
\newtheorem{problem}[theorem]{Problem}
\newcommand{\X}{{\mathcal{X}}}
\newcommand{\Acal}{{\mathcal{A}}}
\newcommand{\Pcal}{{\mathcal{P}}}
\newcommand{\oP}{\overline{\mathcal{P}}}
\newcommand{\E}{{\mathcal{E}}}
\newcommand{\Ecal}{\mathcal{E}}
\newcommand{\Fcal}{\mathcal{F}}
\newcommand{\Xcal}{\mathcal{X}}
\newcommand{\oE}{\overline{\mathcal{E}}}
\newcommand{\F}{{\mathcal{F}}}
\newcommand{\N}{{\mathcal{N}}}
\newcommand{\R}{{\mathbb{R}}}
\newcommand{\Mcal}{{\mathcal{M}}}
\newcommand{\Y}{{\mathcal{Y}}}
\newcommand{\Ycal}{{\mathcal{Y}}}
\newcommand{\Z}{{\mathcal{Z}}}
\newcommand{\supp}{{\operatorname{supp}}}
\newcommand{\Mixt}{\operatorname{Mixt}}
\newcommand{\conv}{{\operatorname{conv}}}
\newcommand{\dotcup}{\ensuremath{\mathaccent\cdot\cup}}
\newcommand{\mysymbolpentagon}{\pentagon}
\newcommand{\mymathbb}{\mathbb}
\begin{document}
\pagestyle{myheadings}

\setcounter{page}{20}
\title{Mixture decompositions of exponential \newline  
families using a decomposition \newline of their sample spaces}

\author{
Guido F. Mont\'ufar  
}

\contact{Guido}{Mont\'ufar}{Department of Mathematics, Pennsylvania State University, University Park, PA 16802. U\,.S\,.A.}{gfm10@psu.edu}

\markboth{G. Mont\'ufar} {Mixture decompositions of exponential
families}
\maketitle

\begin{abstract}
We study the problem of finding the smallest $m$ such that every
element of an exponential family can be written as a mixture of
$m$ elements of another exponential family. We propose an approach
based on coverings and packings of the face lattice of the
corresponding convex support polytopes and results from coding
theory. We show that $m=q^{N-1}$ is the smallest number for which
any distribution of $N$ $q$-ary variables can be written as
mixture of $m$ independent $q$-ary variables. Furthermore, we show
that any distribution of $N$ binary variables is a mixture of $m =
2^{N-(k+1)}(1+ 1/(2^k-1))$ elements of the $k$-interaction
exponential family.
\end{abstract}

\keywords{Mixture model, non-negative tensor rank, perfect code, marginal polytope}

\classification{MSC: 52B05, 60C05, 62E17}

\section{Introduction}

The $m$-mixture of a set of probability distributions $\mathcal{M}$ is the set of all possible convex combinations of $m$ of its points:
\[
\Mixt^m(\mathcal{M})\!:=\!\Big\{\sum_{j=1}^m
\alpha_j p_j \;\Big|\; p_j\!\in\! \mathcal{M}, \;\alpha_j
\!\geq\!0 \;\text{ for } j\in\{1,\ldots,m\},   \text{ and }
\sum_{j=1}^m\alpha_j \!=\!1 \Big\}\;.\label{mixturemodel}
\]
The numbers $\alpha_j\in\mymathbb{R}_{\geq0}$ are called mixture weights and the summands $p_j$  mixture components.
There is an abundant literature on mixture models, see~\cite{Bishop:2006,Lindsay1995,McLachlan:2000,titterington:1985}.
They arise within probabilistic models that involve latent variables, see for instance~\cite{Montufar2011,MonRauh2011}.
An exponential family on a finite set $\Xcal$, with sufficient statistics $A\in\R^{d\times \X}$ and reference measure $\nu\in\R^\X_{>0}$,  is the set of probability distributions $p_\theta$, parametrized by $\theta\in\R^d$, of the form
\[
p_\theta(x)  = \frac{1}{Z_\theta}\nu(x) \exp(\theta^\top  A_x)
\quad \forall\, x\in\X\;,
\]
where $A_x$, $x\in\X$ are the columns of $A$, and $Z_\theta=\sum_{y\in\X} \nu(y) \exp(\theta^\top A_y)$ is the partition function.
See Section~\ref{sect:preliminaries} for details and~\cite{amari:00a,Brown86:Fundamentals_of_Exponential_Families,Efron:1978} for standard references.
We consider the following problem:
\begin{problem}\label{problem1}
{Given two exponential families $\E$ and $\E'$ on a finite set $\X$,
find the smallest natural number $m=m(\E,\E')$,  if there is any, for which $\operatorname{Mixt}^m({\E}) \supseteq {\E'}$.}
\end{problem}
\noindent
We propose a general approach based on coverings and packings of support sets of probability distributions,  combinatorics of convex polytopes, and results from coding theory.
We give explicit solutions when $\Ecal$ is the independence model of $N$ finite valued random variables, or a $k$-interaction exponential family, expressed in terms of the number of variables and the cardinality of their state spaces.
When $\E'$ is equal to the convex hull of $\Ecal$, for instance equal to the set $\Pcal$ of strictly positive probability distributions on $\X$, then $m(\Ecal,\Ecal')$ is the {\em Carath\'eodory number} of $\Ecal$.
We address Problem~\ref{problem1} for closures of exponential families as well.
The closure of a statistical model $\Mcal$, in the standard topology of the real valued functions, is denoted by $\overline{\Mcal}$.
When $\overline{\Ecal}$ is the set of product distributions of $N$ random variables, then $m(\overline{\Ecal},\Ecal')$ is the maximal non-negative outer-product rank of the $N$-way tables of probabilities described by $\Ecal'$.
Problem~\ref{problem1} can be thought of as a tensor decomposition problem. \\

The problem of representing probability distributions as mixtures
of specific models has a long record. A renowned result in this
direction is de~Finetti's theorem, which states that exchangeable
sequences of Bernoulli (i.\,e., binary) variables, are mixtures of
independent and identically distributed Bernoulli variables,
see~\cite{diaconis:1977,Kingman1978}. In general, the expressive
power of mixture models is not satisfactorily understood.
Until recently it was a long standing problem whether the $m$-mixture of the set of probability distributions of $n$ independent binary variables had the dimension expected from parameter counting, which is $\min \{n\cdot m +(m-1), 2^n-1\}$.
M. Catalisano, A. Geramita, and A. Gimigliano~\cite{Catalisano2011} proved that this mixture model indeed has the expected dimension for any combination of $m$ and $n$, except for $n=4$ and $m=3$ when the dimension is smaller.
In connection with this, the identifiability of parameters of mixtures of independent binary variables has been treated, for example in~\cite{BC2011}.
For mixtures of independent non-binary variables the dimension and parameter identifiability problems are largely unsettled. \\

When ${\overline{\Ecal}}$ is the set of probability distributions of two independent variables with values in $\X_1$ and $\X_2$, respectively, it is  known that $\Mixt^m(\overline{\E})$ equals the set $\overline{\Pcal}$ of all possible probability distributions (on $\X_1\times\X_2$) as soon as $m\geq \min\{|\X_1|,|\X_2|\}$, see~\cite{Gilula1979,Settimi:1998}.
This is to say that every non-negative $k\times l$ matrix can be written as the sum of at most $\min\{k,l\}$ non-negative rank-one matrices.
If $|\X_1|,|\X_2|>2$, it is known that $\Mixt^2(\overline{\E})\neq \overline{\Pcal}$, see~\cite{Gilula1979}.
We generalize these results in Theorem~\ref{mixtqary}:
\begin{center}
 {\em \begin{minipage}{\textwidth} The smallest $m$ for which any probability distribution on $\{1,\ldots,q\}^N$ can be written as the mixture of $m$ product distributions, is $q^{N-1}$ (when $q$ is a prime power).\end{minipage}}
\end{center}
The result $q^{N-1}$ is larger than expected from na\"ive parameter counting. In particular, the $m$-mixture model of $N\geq5$ independent binary variables has the same dimension as $\Pcal$ whenever $m\geq {2^N}/{(N+1)}$.
The $m$-mixture of a $k$-interaction model can be viewed as a system of stochastic units including higher-order interactions and a hidden $m$-valued variable. We show (Theorem~\ref{decompfromcubeplus}):
\begin{center}
{\em \begin{minipage}{\textwidth} The smallest $m$ for which any
probability distribution on $\{0,1\}^N$ can be represented as the
mixture of $m$ distributions from the $k$-interaction model is at
most $2^{N-(k+1)}(1+
\tfrac{1}{(2^k-1)})$.\end{minipage}}
\end{center}
We provide similar, however weaker, results when the variables are not binary, but take values in arbitrary finite sets. We also give a bound on the smallest number of mixtures of independent binary distributions needed to represent $k$-interaction models. \\

Our proofs are based on comparison of the support sets of probability distributions contained in the closures of different exponential families.
The support of a probability distribution $p$ is the set $\supp(p):=\{x\in\Xcal\colon p(x)>0\}$.
Combinatorial aspects of support sets of closures of exponential families have been studied in~\cite{kahle:2008,kahle:2006,kahle:2009,RKA10:Support_Sets_and_Or_Mat}.
We add to this analysis and put forward the analysis of a special type of support sets:
\begin{definition}
Given a set of probability distributions $\mathcal{M}$ on a finite
set $\X$ we call $\Y\subseteq\X$ an {\em $S$-set} of $\mathcal{M}$
iff every probability distribution $p$ with support
$\supp(p)\subseteq\Y$, is contained in $\overline{\mathcal{M}}$.
\end{definition}
\noindent
The ``$S$'' in this definition stands for ``support'' and
``simplex'', considering that the set of probability distributions
$p$ with  $\supp(p)\subseteq\Y$ is a simplex (the convex hull of
affinely independent points in Euclidian space). A probability
distribution $p$ can be decomposed as a mixture of $m$ probability
distributions from the closure of $\Mcal$ whenever the support of
$p$ is contained in the union of $m$ $S$-sets of $\Mcal$. This
gives rise to the problem: {\em Given an exponential family on
$\X$, find the smallest possible collection of $S$-sets that
covers $\X$.}

In Section~\ref{sect:preliminaries} we review basics of exponential families, their support sets, and convex supports.
Section~\ref{sect:ssets} formalizes our approach and discusses $S$-sets of exponential families.
Section~\ref{section:mixtureskmodel}
treats coverings and packings using support sets of independence models and $k$-interaction families, and contains our solutions to Problem~\ref{problem1} for these models.
Technical proofs are displaced to the Appendix.

\section{Exponential Families and Convex Supports}\label{sect:preliminaries}

We consider a system of $N\in\mymathbb{N}$ random variables $X_i$ with values in finite sets $\X_i$  for $i\in[N]:=\{1,\ldots,N\}$. The joint {\em sample space} of this system is $\X:=\times_{i=1}^N\X_i$.
The probability distributions with support $\Y\subseteq\X$ are denoted by $\mathcal{P}(\Y)$, or just by $\Pcal$ if $\Y=\X$ is clear.
The closure $\overline{\Pcal}(\Y)$  is the set of all probability distributions $p$ with $\supp(p)\subseteq\Y$ and is called the probability simplex on $\Y$.
The variable $X_i$ is called $q$-ary when $|\X_i|=q$.
For a subset of indices $\lambda\subseteq [N]$, $x_\lambda$ denotes an element of $\times_{i\in\lambda}\X_i$, or the natural restriction of some $x\in\X$ to the coordinates $i\in\lambda$.
The expression $[x_\lambda]$ represents a {\em cylinder set} of dimension $(N-|\lambda|)$, defined as the set of all $y\in\X$ with $y_\lambda=x_\lambda$.
In the binary case  $\X=\{0,1\}^N$ the cylinder sets and the (sets of vertices of) faces of the $N$-dimensional unit cube $[0,1]^N$ are in natural correspondence.  \\

Consider a strictly positive function $\nu$ on $\X$, and a linear subspace $V$ of the space $\R^\X$ of real valued functions on $\X$.
The exponential family $\E_{\nu,V}$ is defined as the image of $V\to\mathcal{P}\subset\mymathbb{R}^\X\;;\; f\mapsto\nu \exp(f)/\sum_{x\in \X}\nu(x)\exp(f(x))$.
For simplicity we set $\nu\equiv1$ and omit the subscript, as the results contained in this paper hold for any strictly positive $\nu$.
A matrix $A\in\mymathbb{R}^{d\times\X}$ with row span $V$ is
called a {\em sufficient statistics} of $\mathcal{E}_V$.  The rows
of $A$ are functions on $\X$ called {\em observables}. Denoting
the columns by $A_x, x\in\X$, the probability distributions in
$\mathcal{E}_V$ can be written as $p_\theta(x) =
\frac{1}{Z_\theta}\exp(\theta^\top  A_x)\; \forall\, x\in\X\;
\forall\, \theta\in\R^d$, where $Z_\theta:=\sum_y \exp(\theta^\top
A_y)$. For simplicity we always denote a sufficient statistics by
$A$ and the corresponding exponential family by $\E$. The
parametrization given above depends on $A$, but $\E$ itself only
depends on $V$ (modulo the constant functions). We assume, without
loss of generality, that $\mathds{1}:=(1,\ldots,1)$ is a row of
$A$. The map $\theta\mapsto p_\theta$ is bijective and $\Ecal$ has
dimension $\operatorname{rk}(A)-1$ exactly when the rows of $A$
(including $\mathds{1}$), are linearly independent, see for
instance~\cite{amari:00a}. The elements of an exponential family
$\E$ are strictly positive.
The closure $\overline{\Ecal}$ includes probability distributions with support strictly contained in $\X$. \\

Given a collection of sets $\Delta\subseteq 2^{[N]}$, the {\em
hierarchical model} $\Ecal_{\Delta}$ is the exponential family
defined by $V_\Delta:=\{\sum_{\lambda\in\Delta} f_\lambda\colon
f_\lambda\in\mymathbb{R}^{\X} \text{ with }
f_\lambda(x_\lambda,x_{[N]\setminus\lambda})=f_\lambda(x_\lambda,\tilde{x}_{[N]\setminus\lambda})\newline\forall\,
x,\tilde x\in\X,\; \forall\, \lambda\in\Delta \}$.  The {\em
$k$-interaction exponential family} $\mathcal{E}^k$ is the
hierarchical model $\E_{{\Delta_k}}$ with $\Delta_k:=\{\lambda
\subseteq [N]: |\lambda|\leq k\}$. The special case $\E^1$ is
called {\em independence model}. The independence model consists
of all strictly positive independent distributions, or product
distributions, of the variables $X_1,\ldots,X_N$. There is a
natural hierarchy of nested models
$\mathcal{E}^1\subset\mathcal{E}^2\subset\cdots\subset\mathcal{E}^N=\mathcal{P}$,
see details
in~\cite{Amari:1999,AyKnauf06:Maximizing_Multiinformation}. The
dimension of $\Ecal_\Delta$ is
 $\dim(\E_\Delta)= \sum_{\lambda\in\Delta}\prod_{i\in\lambda}(|\X_i|-1) -1$, see~\cite{Hosten2002}.
The binary $k$-interaction model has dimension $\dim(\E_{N,\text{bin}}^k)=\sum_{i=1}^k{N\choose i}$.
The sufficient statistics of any binary hierarchical model $\Ecal_\Delta$ can be chosen as  $A=(\sigma_{\lambda,x})_{\lambda\in\Delta,x\in\{0,1\}^N}$, where
\[
\sigma_{\lambda,x}:=(-1)^{|\supp(x)\cap\lambda|}\quad \forall\,
x\in\{0,1\}^N\quad\forall\, \lambda\in2^{[N]}\;.
\]
The rows of $\sigma=(\sigma_{\lambda,x})_{\lambda\in2^{[N]},x\in\{0,1\}^N}$ with labels $\lambda$ from an inclusion complete set $\Delta\subseteq 2^{[N]}$ are an orthogonal basis of $V_\Delta\subseteq\R^\X$, $\Xcal=\{0,1\}^N$.  In particular, $\sigma$ is a Hadamard matrix. \\

The {\em convex support} of $\Ecal$, as realized from a sufficient statistics $A$,
is the image of the {\em moment map}, $\pi \colon \overline{\mathcal{P}}\to\mymathbb{R}^d\,;\; p \mapsto A\cdot p$. This is the following convex polytope (the convex hull of finitely many points in Euclidian space):
\[
Q:= \operatorname{conv}\{A_x\}_{x\in\X} \;.
\]
The moment map $\pi$ defines a homeomorphism of $\overline{\E}$ and $Q$,  and $A\cdot p$ is called the {\em expectation parameter} vector of the point $p\in \overline{\Ecal}$, see~\cite{amari:00a,Efron:1978} and further details in the Appendix.
A {\em face} of the polytope $Q$ is the intersection of $Q$ with a hyperplane in $\mymathbb{R}^{d}$ such that all points of $Q$ lie on one of the closed halfspaces defined through that hyperplane.
In particular, $Q$ is a face of itself.
The dimension of a face $F$ is defined as the dimension of its affine hull $\dim (F) :=\dim \operatorname{aff} (F)$.
The {\em combinatorial type} of $Q$ is the set of all its faces, denoted by $\F(Q)$, together with the partial order of inclusion.
For any $0\leq g\leq \dim(Q)-1$ the union of $g$-dimensional faces $\cup_{F\in \F(Q): \dim(F) = g} F$ contains all vertices of $Q$~\cite[Theorem~15.1.2]{henk:1997}.
Any nonsingular affine transformation of a polytope preserves its combinatorial type~\cite[Theorem~3.2.3]{Gruenbaum:2003}. In turn, the combinatorial type of $Q$ depends only on the row span of $A$ (modulo the constant functions). \\

A set $\Y\subseteq\X$ is called a {\em facial} set of the
exponential family $\Ecal$ iff  $\Y=\{x\in\X\colon A_x\in F\}$ for
some face $F$ of $Q$. The set of all facial sets of $\Ecal$ is
denoted by $\F(\E)\subseteq2^\X$. It is well known that $\F(\E)$
and $\F(Q)$ are in one-to-one correspondence (see, for
example~\cite{geiger:2006, RKA10:Support_Sets_and_Or_Mat}): A set
$\Y\subseteq\X$ is the support of a distribution
$p\in\overline{\E}$ if and only if $\Y$ is a facial set of
$\Ecal$.

\begin{example}
\label{ex1}
Consider the set of strictly positive product distributions of two binary variables, $p(x_1,x_2) = p_1(x_1) p_2(x_2)$ for all $(x_1,x_2)\in\{0,1\}^2$, where $p_1$ and $p_2$  are strictly positive distributions on $\{0,1\}$. This is an exponential family $\Ecal^1$ with sufficient statistics
\[
A=\underset{(00) \;(01) \;(10)\; (11)}{\begin{pmatrix}1 &1&1&1 \\ 1&1&0&0 \\ 1&0&1&0\end{pmatrix}}\;,
\]
whereby $p_1(x_1) p_2(x_2)=\frac1Z \exp(\theta ^\top A_{(x_1
x_2)})$ when $\exp(\theta_2) = p_1(0)/p_1(1)$ and 
$\exp(\theta_3)=p_2(0)/p_2(1)$. The parameter $\theta_1$ is
irrelevant. The convex support $Q_1=\conv\{A_x\}_{x\in\{0,1\}^2}$
is a square. See Figure~\ref{figure2fam}. The two-mixture
$\operatorname{Mixt}^2(\overline{\mathcal{E}^1})$ is the union of
all  line segments $\{\alpha p + (1-\alpha)q \colon
\alpha\in[0,1]\}$ connecting pairs
$p,q\in\overline{\mathcal{E}^1}$. The support sets of
distributions in $\overline{\mathcal{E}^1}$ are $\{0,1\}^2$, the
pairs $\{(00),(01)\}$, $\{(01),(11)\}$, $\{(11),(10)\}$,
$\{(10),(00)\}$, and all the points $\{(00)\}, \{(01)\},\{(10)\}$,
$\{(11)\}$. All support sets are $S$-sets, except for $\{0,1\}^2$.
Figure~\ref{figure2fam} reveals that every point in the
probability simplex is a mixture of two distributions with
supports in the $S$-sets $\{(0,0),(0,1)\}$ and $\{(1,0),(1,1)\}$.
These two $S$-sets cover the entire sample space $\{0,1\}^2$.
\end{example}

\begin{figure}
\begin{center}
\setlength{\unitlength}{7cm}
\begin{picture}(.4,.75)(.2,0)
\put(0,0){\includegraphics[width=3.8 cm]{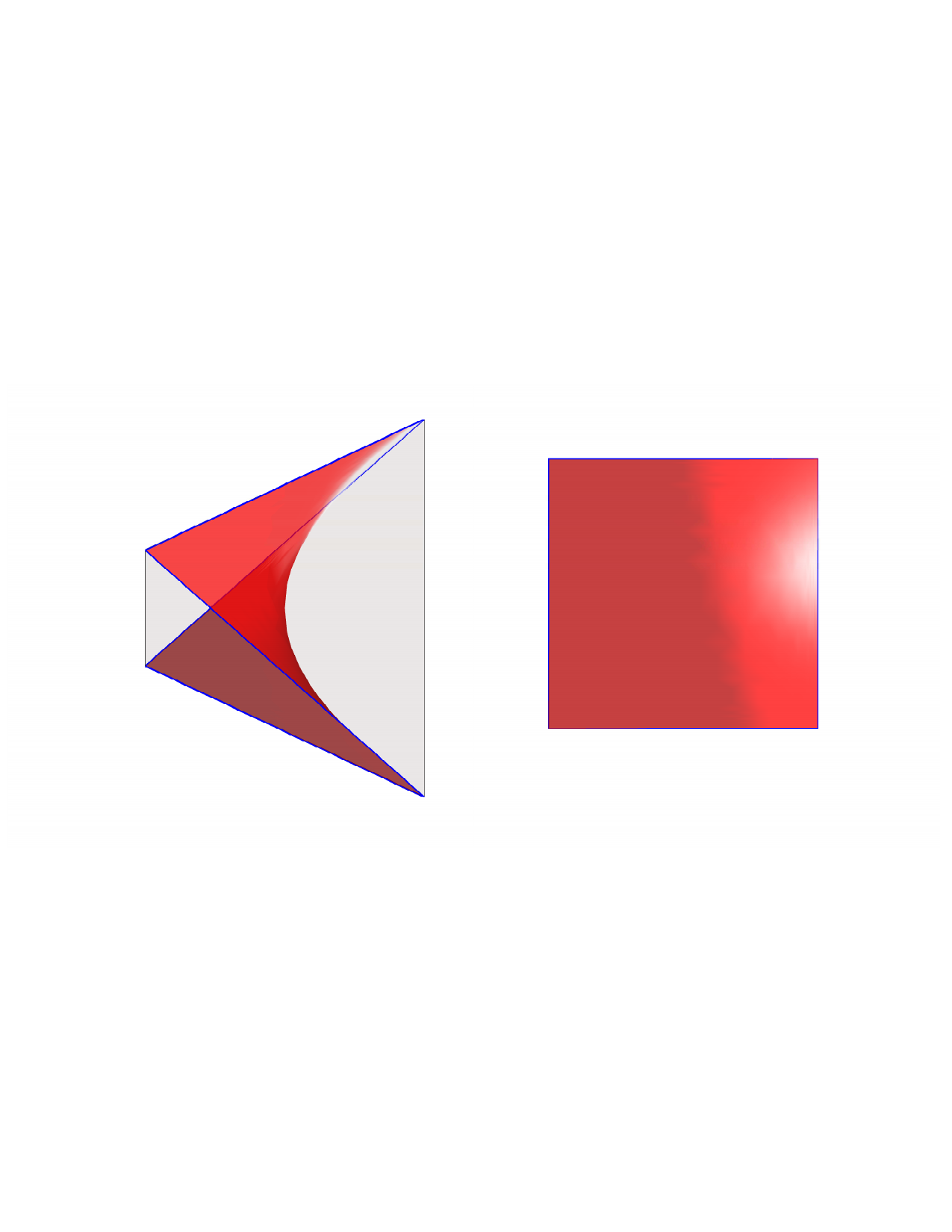}}
\put(.32,.35){$\mathcal{E}^1$} \put(-0.03,.54){$\delta_{(00)}$}
\put(.53,.699){$\delta_{(01)}$} \put(-0.03,.205){$\delta_{(11)}$}
\put(.53,.0){$\delta_{(10)}$}
\end{picture}
\begin{picture}(.4,.75)(.4,0)
\put(0.6,0.2){\includegraphics[width=2.2 cm]{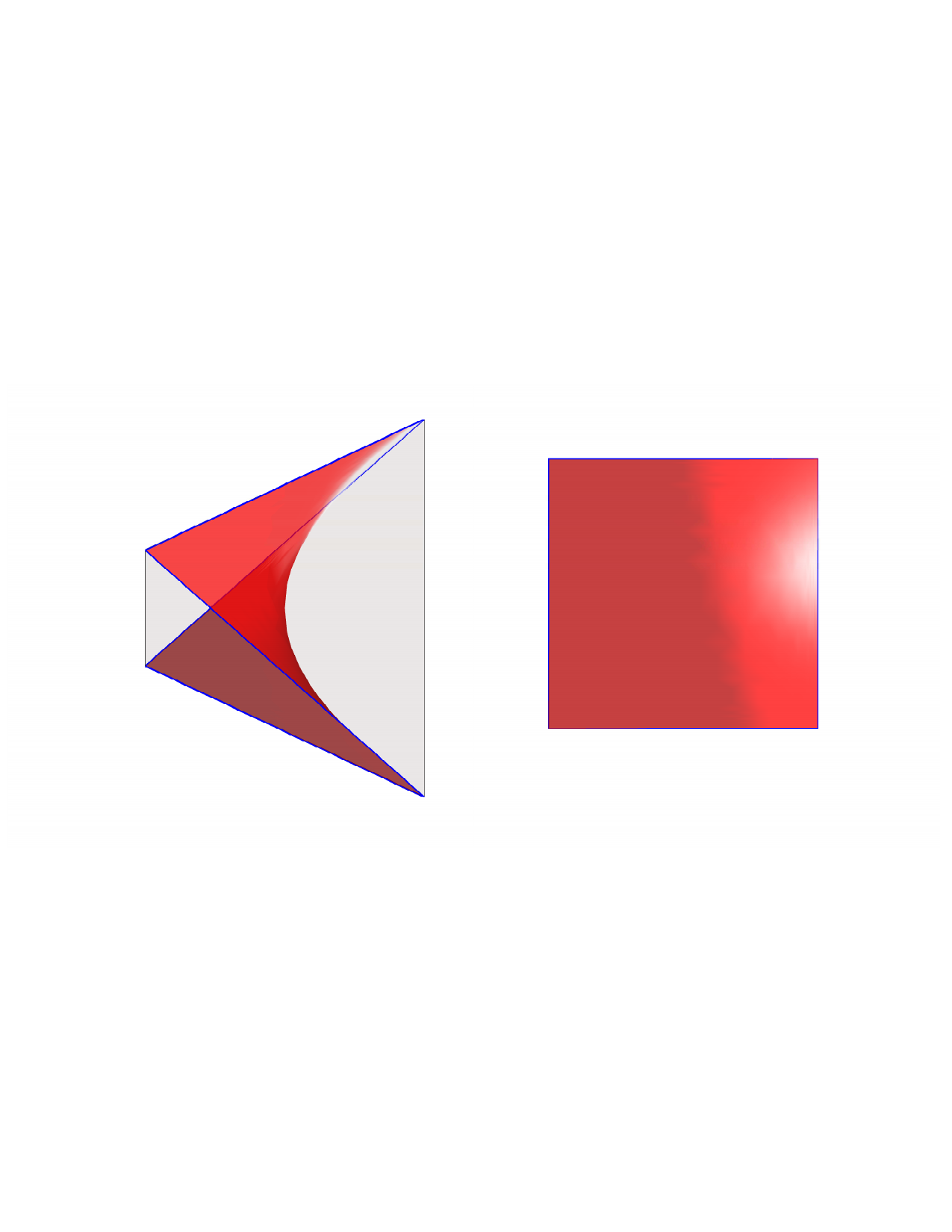}}
\put(.74,.35){$Q_1$} \put(.6,.534){$A_{(01)}$}
\put(.83,.534){$A_{(00)}$} \put(.6,.17){$A_{(11)}$}
\put(.83,.17){$A_{(10)}$}
\end{picture}
\end{center}
\caption{Left: The set of probability distributions of two
independent binary variables $\overline{\mathcal{E}^1}$ (red
surface) within the three-dimensional simplex of probability
distributions on $\{0,1\}^2$. The vertices of the probability
simplex are the point measures $\delta_x$, $x\in\{0,1\}^2$
(distributions with a single support point $\{x\}$). Right: The
convex support of $\Ecal^1$ realized as the convex hull of the
sufficient statistics $A$ from Example~\ref{ex1}.
}\label{figure2fam}
\end{figure}

\section{{\em S}-sets of Exponential Families}\label{sect:ssets}

We assess the expressive power of mixture models, comparing the support sets of distributions from different models.
In this section we formalize the idea, and relate $S$-sets of exponential families to simplex faces of their convex supports. \\

Given an exponential family $\E$ on $\X$ we consider the following function, which gives the minimal cardinality of a facial packing  of any set $\Z\subseteq\X$:
\[
\kappa_\E^f:\;2^\X\to\mymathbb{N}\;;\;\Z\mapsto\min \{n \in \mathbb{N} \colon \exists\, \Y_1,\ldots, \Y_n \in\F(\E) \text{ with } \cup_i\Y_i =\Z\}\;.
\]
We set $\kappa_\E^f(\Z)=\infty$ if there does not exist a facial packing of $\Z$.
All $\Y_i$ in this definition are required to be subsets of $\Z$.
For many exponential families, including hierarchical models (with $\cup_{\lambda\in\Delta}\lambda=[N]$), every $\{x\}$ is a facial set. In particular $\kappa_k^f:=\kappa_{\E^k}^f<\infty$ for all  $k>0$.
We also consider the smallest number of $S$-sets that cover $\Z$, which is the following function:
\[
\kappa_\E^s:\;2^\X\to\mymathbb{N}\;;\;\Z\mapsto\min \{n\in\mathbb{N} \colon \exists\, \Y_1,\ldots,\Y_n \text{ $S$-sets with } \cup_i\Y_i\supseteq\Z\}\;,
\]
whereby we set $\kappa_\E^s(\Z)=\infty$ if there does not exist an $S$-set covering of $\Z$.
If $\kappa$ $S$-sets cover $\X$, then at most $\kappa$ $S$-sets are needed for packing any $\mathcal{Z}\subseteq\X$, because any subset of an $S$-set is an $S$-set.
We abbreviate $\kappa_\E^s(\X)$ with $\kappa_\E^s$.
Finally, given two exponential families $\E$ and $\E'$, we consider the maximum of  $\kappa_\E^f$ restricted to the facial sets of $\E'$:
\[
\kappa_{\E,\E'}^f:=\max_{\Z\in\F(\E')}\kappa_{\E}^f(\Z)\;.
\]
The functions $\kappa_\Ecal^f$ and $\kappa_\Ecal^s$ can be defined for any model $\Mcal\subseteq\overline{\mathcal{P}}$ in the place of the exponential family $\Ecal$ by simply replacing ``facial sets''  with ``support sets of distributions within $\overline{\mathcal{M}}$.''
We have the following:

\begin{lemma}
\label{mixturedecompk}\mbox{}
Consider two exponential families $\mathcal{E},\mathcal{E}'\subseteq\mathcal{P}(\X)$.
\begin{itemize}
 \item If $m\geq \kappa_\E^s <\infty$, then
$\operatorname{Mixt}^m(\mathcal{E}) = \mathcal{P}$.
 \item $\operatorname{Mixt}^m(\overline{\mathcal{E}})\supseteq\overline{\mathcal{ E}'}$ implies $m\geq \kappa_{\E,\E'}^f$.
\end{itemize}
\end{lemma}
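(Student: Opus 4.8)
The plan is to prove the two bullets separately, since they relate mixtures to the two different covering/packing functions. For the first bullet, I would start from the definition of $\kappa_\E^s$: by hypothesis there exist $S$-sets $\Y_1,\ldots,\Y_\kappa$ with $\kappa=\kappa_\E^s<\infty$ and $\cup_i\Y_i=\X$. Given an arbitrary target distribution $p\in\mathcal{P}$, the idea is to split $p$ across these covering sets. Concretely I would write $p=\sum_{i=1}^\kappa \alpha_i\, p_i$, where each $p_i$ is a probability distribution supported inside $\Y_i$ and the $\alpha_i\ge 0$ sum to one. Such a decomposition always exists: one can assign each sample point $x\in\supp(p)$ to one of the $\Y_i$ containing it (the cover guarantees at least one), collect the mass of $p$ on the points assigned to $\Y_i$ into the weight $\alpha_i$, and renormalize to get $p_i$. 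By the definition of an $S$-set, every distribution supported on $\Y_i$ lies in $\overline{\E}$, so each $p_i\in\overline{\E}$. Hence $p\in\operatorname{Mixt}^\kappa(\overline{\E})\subseteq\operatorname{Mixt}^m(\overline{\E})$ for $m\ge\kappa$.

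The technical wrinkle is that the lemma asserts membership in $\operatorname{Mixt}^m(\E)$ (the open family) and equality with $\mathcal{P}$ (strictly positive distributions), not their closures. To handle this I would note that the target $p\in\mathcal{P}$ is strictly positive, and argue that the components $p_i$ can be taken inside $\E$ rather than merely in $\overline{\E}$: since each $\Y_i$ is an $S$-set, distributions with \emph{full} support on $\Y_i$ are still in $\overline{\E}$, and one can perturb them slightly into $\E$ itself while keeping the convex combination equal to the strictly positive $p$. Alternatively, because $p$ is an interior point, a small open neighborhood of each $p_i$ within its simplex still yields valid $S$-set distributions, giving enough freedom to land in $\E$. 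The containment $\operatorname{Mixt}^m(\E)\subseteq\mathcal{P}$ is immediate since elements of $\E$ and hence any finite convex combination supported on all of $\X$ remain positive, so the two inclusions give the claimed equality.

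For the second bullet I would argue by contradiction or directly via the support-set obstruction. Suppose $\operatorname{Mixt}^m(\overline{\E})\supseteq\overline{\E'}$. Pick a distribution $q\in\overline{\E'}$ whose support set $\Z:=\supp(q)$ realizes the maximum defining $\kappa_{\E,\E'}^f$, i.e.\ $\Z\in\F(\E')$ with $\kappa_\E^f(\Z)=\kappa_{\E,\E'}^f$; such a $q$ exists by the cited support-set lemma, since every facial set of $\E'$ is the support of some distribution in $\overline{\E'}$. Writing $q=\sum_{j=1}^m\alpha_j f_j$ with $f_j\in\overline{\E}$ and $\alpha_j\ge 0$, the support of $q$ decomposes as $\Z=\cup_{j:\alpha_j>0}\supp(f_j)$. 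Each $\supp(f_j)$ is a facial set of $\E$ (again by the support-set lemma applied to $\E$), and each is contained in $\Z$ because no mixture component can place mass outside $\supp(q)$ when the weights are nonnegative. Thus the nonzero components provide a facial packing of $\Z$ by at most $m$ sets of $\F(\E)$, so $m\ge\kappa_\E^f(\Z)=\kappa_{\E,\E'}^f$.

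The main obstacle I anticipate is the first bullet's passage from $\overline{\E}$ to $\E$ and from $\overline{\mathcal{P}}$ to $\mathcal{P}$: ensuring that a strictly positive target can be written with components drawn from the \emph{open} exponential family rather than its closure. The key point to get right is that the assignment of support points to covering $S$-sets produces components $p_i$ whose supports may be proper subsets of $\Y_i$, and one must verify that these (or nearby) distributions genuinely lie in $\E$ and not merely on its boundary; using the positivity of $p$ to spread a little mass onto every point of each $\Y_i$ is the cleanest way to secure this. The second bullet is comparatively routine once the support-set lemma is invoked, the only care being that nonnegativity of the weights forces $\supp(f_j)\subseteq\supp(q)$, which is exactly what makes the $\{\supp(f_j)\}$ a packing of $\Z$ rather than merely a cover of some larger set.
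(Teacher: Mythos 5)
Your second bullet matches the paper's argument essentially verbatim and is correct: nonnegativity of the weights forces $\supp(f_j)\subseteq\supp(q)$, the supports are facial by the Rauh--Kahle--Ay lemma, and their union must be all of $\Z$, so the nonzero components form a facial packing of $\Z$ of size at most $m$. The first half of your first bullet (splitting $p$ across the covering $S$-sets to get $\operatorname{Mixt}^{\kappa}(\overline{\E})=\overline{\mathcal{P}}$) is also exactly the paper's argument.

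The genuine gap is the passage from $\operatorname{Mixt}^m(\overline{\E})=\overline{\mathcal{P}}$ to $\operatorname{Mixt}^m(\E)=\mathcal{P}$, which you correctly flag as the main obstacle but then dispose of with a perturbation that does not work. The components $p_i$ you construct are supported on $\Y_i\subsetneq\X$, and \emph{no} such distribution lies in $\E$, because every element of $\E$ is strictly positive on all of $\X$; giving $p_i$ full support \emph{on $\Y_i$} does not help, and ``a small neighborhood of $p_i$ within its simplex'' consists entirely of boundary points of $\overline{\E}$. To land in $\E$ you must replace each $p_i$ by a distribution with full support on $\X$, chosen from the curved (non-convex) manifold $\E$, in such a way that the convex combination still equals $p$ \emph{exactly} --- a coordinated surjectivity statement about the open mixture map, not a componentwise perturbation. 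The paper handles this with a topological argument: the mixture map restricted to the boundary of the parameter domain is a homeomorphism onto $\partial\mathcal{P}\simeq S^{|\X|-2}$, and if some $p\in\mathcal{P}^{\epsilon}$ were missed by the image of the interior, a suitable deformation of that sphere would fail to be contractible in the image, contradicting contractibility of the (interior of the) domain. Some argument of this degree-theoretic flavor, or an explicit exact decomposition with strictly positive components, is needed; without it your proof only establishes $\operatorname{Mixt}^m(\overline{\E})=\overline{\mathcal{P}}$, i.e.\ approximation rather than exact representation of strictly positive distributions.
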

\begin{Proof}
See Appendix.
\end{Proof}
\begin{remark}{\rm
If $\Mixt^m(\Ecal)=\Pcal$, then also
$\Mixt^m(\overline{\Ecal})=\overline{\Pcal}$, and if
$\Mixt^m(\overline{\Ecal})\neq\overline{\Pcal}$, then
$\overline{\Pcal}\setminus \Mixt^m(\Ecal)$ has a non-empty
interior. If
$\operatorname{Mixt}^m(\overline{\mathcal{E}})=\overline{\mathcal{P}}$,
then $m\geq\max \kappa_\E^f$, and if $\kappa_{\E,\E'}^f=\infty$,
then $\conv(\E) \not\supset \E'$.
Lemma~\ref{mixturedecompk} can be formulated for arbitrary models
as well. In that case however, the implication of the first item
holds only for the closures: If $m\geq \kappa_\Mcal^s$, then
$\operatorname{Mixt}^m(\overline{\mathcal{M}}) =
\overline{\mathcal{P}}$.}
\end{remark}

\begin{example}\label{proposuno}
Any distribution $p$ with support in a cylinder set
$[y_{\Lambda^c}]$, $\Lambda\subseteq [N]$, $|\Lambda|=k$ is
contained in the closure of the $k$-interaction family
$\overline{\E^k}$. Indeed, if $p\in\overline{\mathcal{P}}$ is
arbitrary with support $[y_{\Lambda^c}]$, then
$p(x)=\underset{\alpha\to\infty}{\lim} \exp(f(x_\Lambda) - \alpha
\sum_{j\in\Lambda^c} g_j(x_{j}))/{Z}$, where $Z$ is a
normalization constant, $f(x)=f(x_\Lambda)$ is a function of the
variables $X_i, i\in\Lambda$ with $f(x_\Lambda)=\log(p(x))+\log(Z)
\;\forall\, x\in[y_{\Lambda^c}]$, and $g_j$ is a function of
$X_j$, only, taking value $0$ for $x_j=y_j$ and $1$ otherwise.
Therefore, every $k$-dimensional cylinder set is an $S$-set of
$\E^k$. In particular, if $\X=\{1,\ldots,q\}^N$, then
$\kappa_{\E^k}^s \leq q^{N-k}$ and
$\Mixt^{q^{N-k}}(\Ecal^k)=\Pcal$.
\end{example}

\newpage
\begin{lemma}\label{sproperty}
Consider an exponential family $\E\subseteq\mathcal{P}(\X)$ and some $\Y\subseteq\X$. The following items are equivalent:
\begin{itemize}
 \item $\overline{\Ecal}\supseteq\overline{P}(\Y)$, i.\,e., $\Y$ is an {\em $S$-set}.
 \item $\conv\{A_y\}_{y\in\Y}$ is a $(|\Y|-1)$-dimensional simplex face of the convex support $Q$.
 \item $\supp(m^\pm)\not\subset \Y$ for all $m\in\ker(A)\subset\mymathbb{R}^\X\setminus\{0\}\,$, where $m^\pm(x):=\max\{0,\pm m(x)\}\;$ $\forall\, x\in\X$.
\end{itemize}
\end{lemma}
\begin{Proof}
The first item implies the second, because the moment map defines
a bijection between ${\mathcal{P}(\Y)}$ and
$\conv\{A_y\}_{y\in\Y}$. For the other direction: The matrix
$A_{\Y}:=(A_y)_{y\in\Y}$ defines an exponential family $\E_\Y
=\overline{\E}\cap\mathcal{P}(\Y)$, because $\Y$ is facial. If
$\conv \{A_y\}_{y\in\Y}$ is a $(|\Y|-1)$-simplex, then all columns
of $A_{\Y}$ are linearly independent ($\mathds{1}$ is a row of
$A$), and hence $\ker A_{\Y}=\{ 0\}$. As a consequence, any $p\in
\overline{\mathcal{P}}(\Y)$ trivially satisfies $\prod_x
(p(x))^{m^+(x)}-\prod_x (p(x))^{m^-(x)}=0 \;\forall\, m\in\ker
A_\Y$, which implies
$p\in\overline{\E}_\Y$~\cite{geiger:2006,RKA10:Support_Sets_and_Or_Mat}.
The~third item is equivalent to: $\Y$ is facial,
see~\cite{RKA10:Support_Sets_and_Or_Mat}, and additionally
$\supp(m) \not\subset\Y\;$ $\forall\, m\in\ker(A)\setminus\{0\}$.
This implies $\ker A_{\Y}= \{0\}$.
\end{Proof}

\begin{remark}{\rm
By Lemma~\ref{sproperty}, $\overline{\E}$ contains any $p$ with $|\supp (p)|< |\supp (m^+)|$ for all $m\in\ker(A)\setminus\{0\}$, and there always exists some $q\in{\overline{\Pcal}(\X)}\setminus\overline{\Ecal}$ with 
$$|\supp (q)|=\min_{m\in\ker(A)\setminus\{ 0 \}}|\supp (m^+)|.$$ 

When every column $A_x$ of the sufficient statistics is a vertex of $Q$ and $\kappa$ simplex faces of $Q$ contain all $A_x$, then  $\Mixt^\kappa(\overline{\Ecal})=\conv(\overline{\Ecal})$.
When all $A_x,x\in\X$ are distinct vertices of $Q$, then $\overline{\Ecal}$ contains all possible point measures,
$\kappa_\Ecal^s$ is the smallest number of simplex faces that contain all vertices, and $\Mixt^{\kappa_\Ecal^s}(\overline{\Ecal})=\overline{\Pcal}$.
Computing $\kappa_\Ecal^s$ can be difficult, in general.
Two examples of related problems are: Finding minimum clique coverings, which  is a graph-theoretical NP-complete problem, and describing perfect covering codes on $\{0,1\}^N$, which so far are not completely understood (see~\cite{CohenCoveringCodes}). \\

A polytope $P$ is called {\em $K$-neighborly}, when the convex
hull of any $K$, or less, of its vertices is a face
(see~\cite{Kalai:1993,Shemer:1982}). If the convex support of $\E$
is $K$-neighborly and $\overline{\Ecal}$ contains all point
measures, then every $\Y\subseteq\X$ with $|\Y|\leq K$ is an
$S$-set of $\Ecal$. It is known that the convex support $Q_k$ of
the $k$-interaction family is $(2^k-1)$-neighborly,
see~\cite{kahle:2008}. In Section~\ref{section:mixtureskmodel} we
will study the {\em simpliciality} of $Q_k$ and corresponding
vertex set coverings using simplex faces. A polytope $P$ is
$K$-simplicial if all its $K$-dimensional faces are simplices
(this does not mean that any $(K+1)$ vertices define a face of
$P$).}
\end{remark}

\begin{example}\label{polyex}
The convex support of the two-interaction family $\mathcal{E}^2$ on $\Xcal=\{0,1\}^4$ is a polytope with $16$ vertices and dimension $10$.
We computed the face lattice of $Q_2$ (using the software \texttt{Polymake}~\cite{polymake}).
We found $56$ facets (proper faces of maximal dimension, $9$), out of which $16$ are simplices.
One of them is $\conv\{A_x\}_{x\in\Y}$,
$ \Y=\{(0     0     0     0),
  (1     0     0     0),$
$  (0    1     0     0),
  (0     0    1     0),$
$  (1     0     0    1),
  (0    1     0    1),
  (0     0    1    1),$
$  (1    1     0    1),
  (1     0    1    1),
  (0    1    1    1)\}$.
In total $8$ $S$-sets of $\Ecal^2$ contain $6$ binary vectors with an even number of ones, and $8$ contain $6$ vectors with an odd number of ones.
The other $40$ facets have each $12$ vertices. Denote the $S$-sets
(of cardinality $10$) by $F_i, i=1,\ldots,16$ and the  facial sets
of cardinality $12$  by $G_i, i=1,\ldots,40$. We found that
$F_i\cup F_j\neq \X$ $\forall\, i,j$ and $F_i\cup G_j\neq \X$
$\forall\, i, j$. Since all faces  (and in particular all simplex
faces) are subsets of some facet, at least $3$ $S$-sets of
$\Ecal^2$ are needed to cover $\X$.
\end{example}

\begin{example}\label{ex2}
Let $\X=\{0,\ldots,n-1\}$ and let $\E$ be an exponential family with convex support an $n$-gon (a polygon with $n$ vertices).
We call this family an $n$-gon exponential family.
It is two-dimensional and contains all point measures $\delta_x$ in its closure. $n$-gon exponential families  have  been studied in the context of model design in~\cite{AMR2011}.
Assume that the boundary of the convex support $Q$ of an $n$-gon family  is the polyline $A_0A_{1}\cdots A_{n-1}A_0$.
The facial sets are: $\X$, the pairs $\{i,i+1\} \mod n$, and the points $\{i\}$ for ${i\in\X}$.
All facial sets, except $\X$, are $S$-sets.
The sample space $\X$ is covered by $\kappa^s_{\E}=\left\lceil\frac{n}{2}\right\rceil$ $S$-sets, while the packing of any set $\Y\subseteq\X$ requires at most $\max\kappa^f_{\E}=\left\lfloor\frac{n}{2}\right\rfloor$ facial sets.
By Lemma~\ref{mixturedecompk} the smallest $m$ for which $\Mixt^m(\E)=\operatorname{conv}(\E)=\mathcal{P}$ satisfies $\left\lfloor\frac{n}{2}\right\rfloor\leq m \leq \left\lceil\frac{n}{2}\right\rceil$.
For $n=5$ (see Figure~\ref{figurepentagon} right) we show that $m\geq 2=\left\lfloor\frac{n}{2}\right\rfloor$ is necessary and sufficient, see below.
\end{example}

\begin{figure}
\begin{center}
\setlength{\unitlength}{7cm}
\begin{picture}(.4,.75)(0.2,0)
\put(0,0){\includegraphics[width=4 cm]{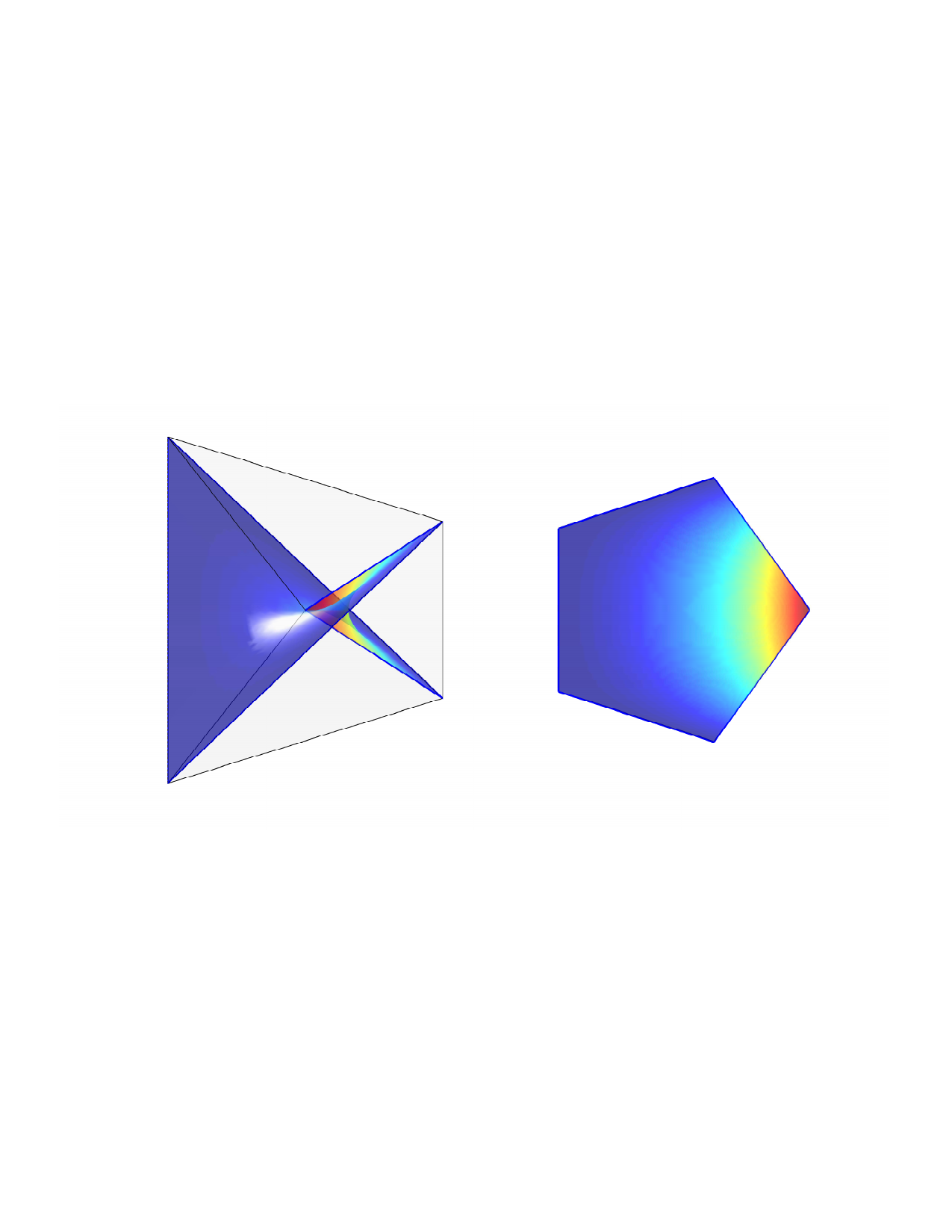}}
\put(.31,.22){$\mathcal{E}_{\mysymbolpentagon}$}
\put(-.02,.685){$\delta_{1}$} \put(.54,.55){$\delta_{3}$}
\put(-.02,.0){$\delta_{2}$} \put(.54,.148){$\delta_{0}$}
\put(.25,.38){$\delta_{4}$}
\end{picture}
\begin{picture}(0.4,.75)(0.4,0)
\put(0.62,0.16){\includegraphics[width=2.5
cm]{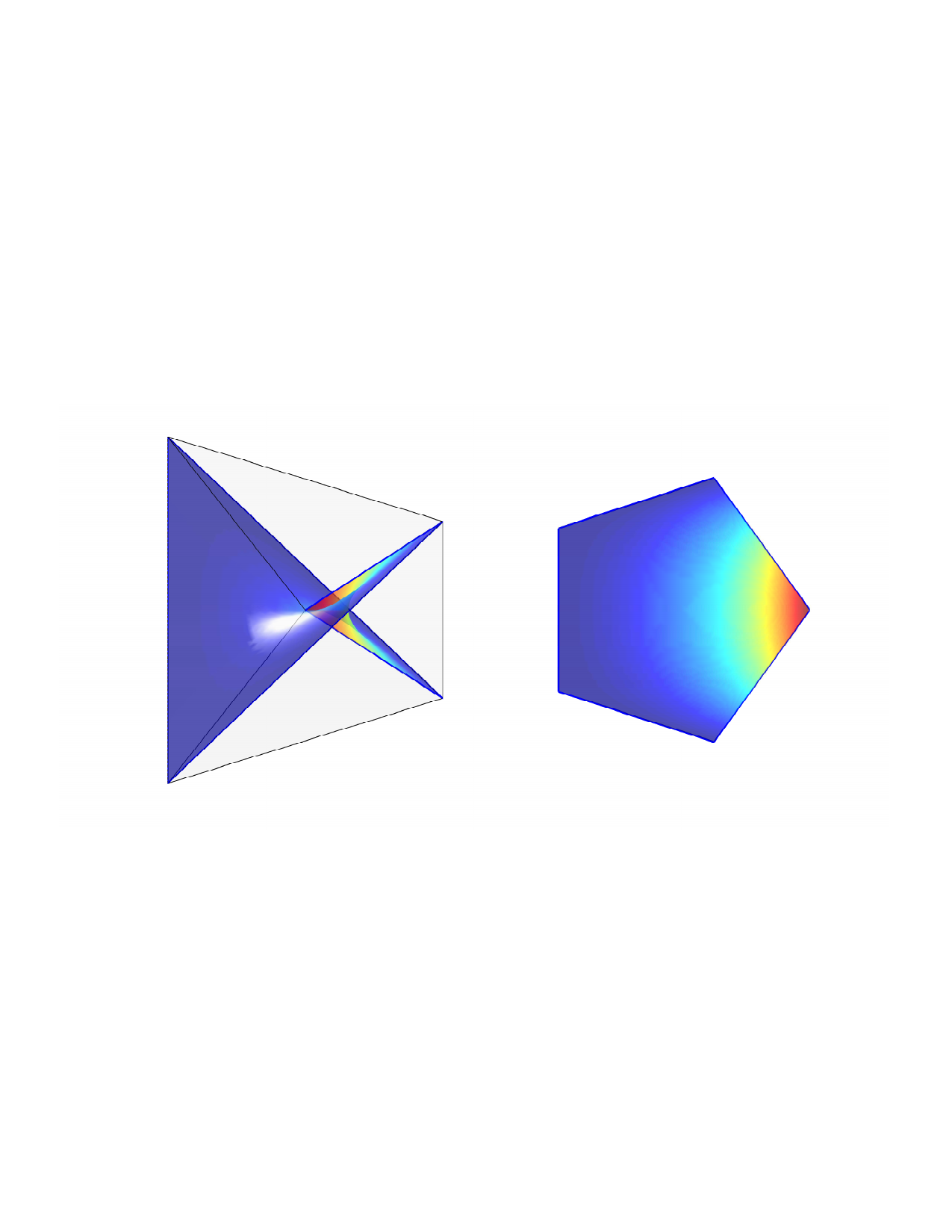}}
\put(.75,.345){$Q_{\mysymbolpentagon}$} \put(.6,.47){$A_{1}$}
\put(.8,.52){$A_{0}$} \put(.6,.19){$A_{2}$} \put(.8,.13){$A_{3}$}
\put(.95,.32){$A_{4}$}
\end{picture}
\end{center}
\caption{Schlegel diagram of the four-dimensional probability
simplex on $\{0,\ldots,4\}$ and corresponding projection of the
two-dimensional exponential family
$\mathcal{E}_{\mysymbolpentagon}$ with convex support
$Q_{\mysymbolpentagon}$ a regular pentagon. The color indicates
the value that the distributions take on $x=4$; blue for $p(4)=0$
and red for $p(4)=1$. The uniform distribution
$\frac{\mathds{1}}{5}$ and the point measure $\delta_4$ are
projected into the same point. }\label{figurepentagon}
\end{figure}

\begin{proposition}\label{pentag}
If $\E_{\mysymbolpentagon}$ is an exponential family on $\X=\{0,1,2,3,4\}$ with pentagonal convex support, then $\Mixt^2({\E}_{\mysymbolpentagon})={\mathcal{P}}(\X)$.
\end{proposition}
\begin{Proof} See Appendix. \end{Proof}

\begin{remark}{\rm
Example~\ref{ex2} shows that in general $\kappa_\Ecal^f\neq
\kappa_\Ecal^s$. In such a case $m=\kappa_\Ecal^s$ is not
necessarily the smallest $m$ for which $\Mixt^m(\Ecal)=\Pcal$. For
pentagonal exponential families $\kappa_\Ecal^s$ is off by one,
and the same likely happens for all $n$-gon exponential families
with odd $n$ greater or equal to five. However, in the next
section we show that $\kappa_\Ecal^f$ equals $\kappa_\Ecal^s$ for
many independence models, and we believe that this generalizes to
many interaction models.}
\end{remark}

\section{Mixtures of Hierarchical Models}\label{section:mixtureskmodel}

\subsection{Independence Models}
The  Hamming distance  between two vectors $x=(x_1,\ldots,x_N)$ and  $y=(y_1,\ldots,y_N)$ is $d_H(x,y):=|\{i\in[N]\colon x_i\neq y_i\}|$. A set $\Ycal\subseteq\X$ has minimum distance $d$ if the smallest Hamming distance between two distinct points $x,y\in\Y$ is at least $d$.
The  independence model of $N$ variables with joint sample space $\X=\times_{i\in[N]}\X_i$ is:
\begin{eqnarray}
{\E^1 }&=&\Big\{p\in\Pcal :p(x_1,\ldots,x_N)=\prod_{i\in[N]}
p_i(x_i) \;\forall\, (x_1,\ldots,x_N)\in\X , \\
&&\qquad \text{ and }p_i\in {\mathcal{P}}(\X_i) \;\forall\,
i\in[N]\Big\}\;.\nonumber
\end{eqnarray}
For binary variables the convex support of $\Ecal^1$ is a combinatorial $N$-cube, the facial sets are the cylinder sets (including those of dimension zero), and the $S$-sets are the pairs  of vectors with Hamming distance one to each other, plus all individual binary vectors.
In general, the convex support  is a Cartesian product $Q_1=\times_{i\in[N]} S_i$, where $S_i$ is a ($|\X_i|-1$)-dimensional simplex for every $i\in[N]$.
The facial sets are:
\begin{equation}
\Fcal(\Ecal^1) = \{\times_{i\in[N]} \Y_i: \Y_i\subseteq\X_i  \text{ for all } i\in[N]\}\;,
\end{equation}
and the $S$-sets are the subsets of one-dimensional cylinders;
i.\,e., the sets
\begin{equation}
\{y_1\}\times\cdots\times\{y_{i-1}\}\times\Y_i\times\{y_{i+1}\}\times\cdots\{y_N\}
\end{equation}
 with $y_j\in\X_j$ for all $ j\in [N]\setminus\{i\}$   and $\Y_i\subseteq\X_i $  for $ i\in[N]$.

We consider the maximal cardinality of subsets $\Y\subseteq\X=\times_{i\in[N]}\X_i$ with minimum distance  two:
\begin{equation}
\Acal_\X:=\max \{|\Y|:\Y\subseteq \X \text{ and } d_H(x,y)\geq
2\;\forall\, x,y\in \Y, x\neq y\}\;.
\end{equation}
For example, the sets of binary vectors of length $N$ with an even (odd) number of ones,
$Z_{\pm}:=\big\{x\in\X=\{0,1\}^N\colon \prod_{i\in [N]} (-1)^{x_i}=\pm 1\big\}$,
have the largest possible cardinality $|Z_\pm|=2^{N-1}$ among all sets of length-$N$ binary vectors of minimum distance two.
For mixtures of independence models we have:
\begin{theorem}\label{mixtqary}
The mixture model $\Mixt^m(\overline{\Ecal^1})$ contains every
probability distribution with support in a union of $m$
one-dimensional cylinder sets, and does not contain any
probability distribution supported by a set of cardinality more
than $m$ and minimum distance (at least) two. Furthermore,
\begin{itemize}
 \item
If $m\geq {|\X|}/\max\{|\X_i|\}_{i\in[N]}$, then $\operatorname{Mixt}^m(\E^1) = \mathcal{P}$.
 \item
If $\operatorname{Mixt}^m(\overline{\E^1}) \supseteq \mathcal{P}$,
then $m\geq \Acal_\X \geq \max\{\frac{s^{N}}{1+N(s-1)}, q^{N-1}
\}$,\newline where $s=\min\{|\X_i|\}_{i\in[N]}$ and $q$ is the
largest prime power smaller or equal to~$s$.
\end{itemize}
In particular, when $\X=\{1,\ldots,q\}^N$ and $q$ is a prime power, then
\[
\operatorname{Mixt}^m(\E^1) = \mathcal{P} \quad\text{ if and only if }\quad m\geq q^{N-1}\;.
\]
\end{theorem}
\begin{Proof}
For the first statement: Any face of $Q_1$ which has more than one vertex contains edges.
An edge of $Q_1$ is the convex hull of a column pair $A_x,A_y$ of the sufficient statistics, with $d_H(x,y)=1$.
Therefore, any facial set contained in a set that does not contain pairs of Hamming distance one, has cardinality one.
For the remaining statements, assume (without loss of generality)
$|\X_1|=\max\{|\X_i|\}_{i\in[N]}$ and
$|\X_N|=\min\{|\X_i|\}_{i\in[N]}$. The first bullet is by
Lemma~\ref{mixturedecompk}, covering the sample space with the
$S$-sets $\{(x_1,y_2,\ldots,y_N)\colon x_1\in\X_1\}$ for all
$(y_2,\ldots,y_N)\in\times_{i\in[N]\setminus\{1\}}\X_i$. For the
second bullet we use the first part of this theorem, and the fact
that $\overline{\Pcal}(\{1,\ldots,s\}^N)$ is contained in
$\overline{\Pcal}(\X)$. For any $q$ the {\em maximal cardinality
of a $q$-ary code of length $N$ and minimum distance $d$}, defined
as $\Acal_q(N,d):=\max \{|\Y|:\Y\subseteq \{1,\ldots,q\}^N \text{
and } d_H(x,y)\geq d\;\forall\, x,y\in \Y, x\neq y\}$,
is familiar in coding theory. It is known that
$\Acal_q(N,2)\geq\frac{q^N}{\sum_{j=0}^{d-1}{N\choose j} (q-1)^j}$
({\em Gilbert-Varshamov
bound}~\cite{Gilbert:1952,Varshamov:1957}), and that when $q$ is
any power of a  prime number, $\Acal_q(N,d)\!\geq\! q^k$, where
$k$ is the largest integer with
$q^k\!<\!\frac{q^N}{\sum_{j=0}^{d-2}{{N-1}\choose j} (q-1)^j}$.
Evaluating these bounds for $d=2$ completes the proof.
\end{Proof}

\begin{corollary}\label{lemmaopti}\label{coroptimixturedecomp1}
Let $1\leq k\leq N-1$. If $\Xcal=\{0,1\}^N$ and $\operatorname{Mixt}^m(\overline{\E}) \supseteq \E^k$, then
\[
m  \geq \max\{|\Z| : \Z \in \Fcal(\E^k),\, \Z\subseteq Z_\pm\} \geq 2^k -1\;.
\]
\end{corollary}
\begin{Proof}
The first inequality is by Lemma~\ref{mixturedecompk},
since $\kappa_{1,k}^f  \geq \max\{|\Z| : \Z \in \Fcal(\E^k)$. The second one follows from Lemma~\ref{cyclic}.
\end{Proof}

\begin{example}\label{exese} \mbox{}
The first inequality in Corollary~\ref{lemmaopti} is useful when we have information about the support sets of $\overline{\Ecal^k}$.
The second inequality improves the bound $$m\geq (\dim(\Ecal^k)+1)/(N+1)=\sum_{j=0}^k{N\choose j}/{(N+1)}$$ that can be  derived comparing the dimension of both models, when $k$ is close to $N$.
For instance:
\begin{itemize}
\item
When $\X=\{0,1\}^4$, $\E^2$ has $S$-sets of cardinality $6$, contained in $Z_+$ (see Example~\ref{polyex}).
Hence, if $\operatorname{Mixt}^m(\overline{\E^1}) \supseteq\E^2$, then $m\geq 6$.
\item
If $\Xcal=\{0,1\}^4$ and $\operatorname{Mixt}^m(\overline{\E^1}) \supseteq\E^3$, then $m\geq 7$. For comparison, counting parameters yields only $m\geq \left\lceil(\dim(\E^3)+1)/(4+1)\right\rceil = 3$.
\end{itemize}
\end{example}

\newpage
\subsection{Interaction Models}

\begin{theorem}\label{decompfromcubeplus}
Consider a hierarchical model $\Ecal_\Delta$ on $\Xcal=\times_{i\in[N]}\X_i$ with $\Delta\supseteq\Delta_k$,\linebreak $1\leq k<N$.
\begin{itemize}
\item The mixture model
$\operatorname{Mixt}^m(\overline{\mathcal{E}_\Delta})$ contains
any probability distribution $p\in\overline{\Pcal}(\X)$ \newline when $m$
is larger or equal to $\min\{n\colon \exists\, \Y_1,\ldots,\Y_n
\text{ $k$-cylinder sets of $\X$ with }\linebreak
\supp(p)\subseteq \cup_{i=1}^n \Y_i \}$. Furthermore,
$\Mixt^m(\Ecal_\Delta)=\Pcal$ whenever \\ $m\geq
|\X|/\max\{\prod_{i\in\lambda}|\X_i| \colon
\lambda\subseteq[N], |\lambda|=k\}$. 
\item In the case of binary
variables, the convex support $Q_\Delta$ is $(2^k\!-1)$-neighborly,
$(2^{k+1} -3)$-simplicial, and all its vertices are contained in
the union of $2^{N-(k+1)}(1+\frac{1}{2^k-1})$ simplex faces. Moreover,
$\operatorname{Mixt}^m(\mathcal{E}_\Delta)=\mathcal{P}$ whenever
$m\geq 2^{N-(k+1)}(1+\frac{1}{2^k-1})$.
\end{itemize}
\end{theorem}

The first item of Theorem~\ref{decompfromcubeplus} follows from the observation that all $k$-cylinders are $S$-sets of $\Ecal^k$, see Example~\ref{proposuno}.
The $(2^k-1)$-neighborliness of $Q_\Delta$ was shown in~\cite{kahle:2008}.
The $(2^{k+1}-3)$-simpliciality follows from a classic result of convex polytopes, which  states that  if $P$ a $K$-neighborly $d$-dimensional polytope, then every face $F$ of dimension less than $2K$ is a simplex,
see~\cite[Theorem~7.4.3]{Gruenbaum:2003}.
In order to prove the remaining statements of the theorem, we need to find the $(2^{k+1}-3)$-dimensional faces of $Q_\Delta$ and show that at most $2^{N-(k+1)}(1+\frac{1}{2^k-1})$ of them cover all vertices.
Before proving this, some remarks are appropriate:

\begin{remark}{\rm
Regarding the upper bound  $2^{N-(k+1)}(1+\frac{1}{2^k-1})$ on the
minimal cardinality of an $S$-set covering of $\{0,1\}^N$ (second
item of Theorem~\ref{decompfromcubeplus}): When  $k=1$ the bound
equals $2^{N-1}$ and is tight by Theorem~\ref{mixtqary}. When
$N=4$ and $k=2$ the bound is
$\left\lceil2^{4-(2+1)}/(1-2^{-2})\right\rceil=3$ and is tight in
view of Example~\ref{polyex}. When $k=N-1$ the bound equals $2$
and is tight, because
$\Mixt^1(\Ecal_\Delta)=\Ecal_\Delta\neq\Pcal$. In spite of this,
the characterization of the simplex faces of convex support
polytopes and the computation of the smallest
simplex-face-vertex-set coverings
 for hierarchical models with general interaction sets $\Delta$ and non-binary variables, is not fully accomplished at this point.
In particular we believe that the bound provided in the first item
can be further improved, as for binary variables the second item
provides a much better bound.}
\end{remark}

Note that any facial set of $\Ecal^k$ is a facial set of $\Ecal_\Delta$, $\Delta\supseteq\Delta_k$.
For $0 <k<N$, any $(k+1)$-dimensional cylinder set
$[y_{\lambda^c}], \lambda\subseteq[N], |\lambda|=k+1$
is a facial set of $\E^k$ (for example by similar arguments as in Example~\ref{proposuno}). Hence  the vertices of $Q_\Delta$ can be covered by $2^{N-(k+1)}$ disjoint faces $\{F_i\}_i$ corresponding to $(k+1)$-dimensional cylinder sets.
These $F_i$ are not simplices, but they contain $(2^{k+1}-3)$-dimensional simplex faces (see below), which we can arrange in a convenient way to cover all vertices of $Q_k$ disjointly.
We use the following Lemma~\ref{cyclic}, which subsumes various ideas and remarks from~\cite{Gruenbaum:2003,Hosten2002,kahle:2009}.

A $d$-dimensional {\em cyclic polytope} with $v$ vertices (see~\cite{Gale:1963}) is defined as the convex hull of $v$ distinct points on the $d$-{\em moment curve}:
$ C(v,d):=\conv\{x(t_i)\}_{i=1,\ldots,v}$, where $v\geq d+1$, $t_1<\cdots<t_v\in\R$, and $x(t) = (t,t^2,\ldots,t^d)\in\R^d$.

\begin{lemma}\label{cyclic}
Let $0< k<N$ and $\X=\{0,1\}^N$. Any $(k+1)$-dimensional cylinder
set $\Y$ is a facial set of $\mathcal{E}^k$ and the corresponding
face $F$ of the convex support $Q_k$ is a simplicial polytope,
combinatorially equivalent to the cyclic polytope
\mbox{$C(2^{k+1},2^{k+1}\!-\!2)$}. There are exactly ${2^{2k}}$
$S$-sets of cardinality $(2^{k+1}-2)$ contained in $\Y$;
namely $\{\Z\subset \Y:  \Y\cap Z_\pm\not\subseteq \Z  \}$. In
particular, if a set $Z\subseteq\X$ contains $\Y\cap Z_\pm$ but
does not contain $\Y$, then $\Z$ is not facial.
\end{lemma}
\begin{Proof} See Appendix. \end{Proof}
\bigskip

\noindent P\,r\,o\,o\,f\,  o\,f  \, T\,h\,e\,o\,r\,e\,m\, \ref{decompfromcubeplus}. 
Let $x_i^{i+k}:=(x_i,\ldots,x_{i+k})\in \{0,1\}^{\{i,\ldots,i+k\}}$. Consider the following partition of $\{0,1\}^N$ into $(k+1)$-dimensional cylinder sets:
\[
C_y :=\{(x_1^{k+1},x_{k+2}^N)\in\{0,1\}^N \colon x_{k+2}^N=y\} \quad\text{for all}\quad y\in\{0,1\}^{N-(k+1)}\;.
\]
By Lemma~\ref{cyclic} the elements of any  $C_y$ can be disjointly covered by:

\noindent{\em (i)}  An $S$-set of $\overline{\mathcal{E}^k}$ of cardinality $2^{k+1}-2$. We denote this set by $G_y$.\\
\noindent{\em (ii)} A pair of vectors differing in one entry:
\begin{equation}
E_y:=\{(z_1^{k},x_{k+1},y)\in\{0,1\}^N\colon z_1^{k}   \text{
fixed}\}\;.  \label{pairobs}
\end{equation}

The vector $z$ in eq.~\eqref{pairobs} can be chosen equal for all $E_y$, such that the $S$-sets $\{G_y\}_y$ satisfy:
\[
\bigcup_{y\in\{0,1\}^{N-(k+1)}} G_y \quad =\quad \{0,1\}^N\setminus \tilde C_{N-k}\;,
\]
where $\tilde C_{N-k}$ is the following $(N-k)$-dimensional cylinder set:
\[
\tilde C_{N-k} = \bigcup_{y\in\{0,1\}^{N-(k+1)}} E_y \quad=\quad \{(z_1^k,\tilde y_1^{N-k}): z_1^k \text{ fixed}\} \;.
\]
The cylinder set $\tilde C_{N-k}$ can be considered as a new sample space which still has to be covered using as few $S$-sets as possible.
If $N-k < k+1$, only one $S$-set is required.
Iteration of the previous idea until exhausting all coordinates yields that $\kappa$, the minimal number of simplex faces of $Q_k$ covering all vertices, is not more than: 
\[
\kappa\leq 1+ \sum_{0\leq i\leq
\frac{N-(k+1)}{k}}\frac{2^{N-ik}}{2^{k+1}} =
\left\lceil\frac{2^N}{2^{k+1}}\sum_{i=0}^\infty \frac{1}{(2^k)^i}
\right\rceil = \left\lceil
\frac{2^{N-(k+1)}}{1-2^{-k}}\right\rceil\;.
\]
\makebox{}\hfill$\Box$

\bigskip

We conclude this section with a few observations on $S$-sets of hierarchical models.
From Lemma~\ref{cyclic} we can derive a rough cardinality upper bound for the $S$-sets of $\Ecal^k$.
Let $K(N,k+1)$ denote the smallest cardinality of a set $\Y\subseteq\{0,1\}^N$ which intersects all $(k+1)$-dimensional cylinder sets, and let $B_{N,k+1}$ denote a Hamming ball in $\{0,1\}^N$ of radius $k+1$.
\begin{proposition}\label{coroese}
 If $\Y\subseteq\X=\{0,1\}^N$ is an $S$-set of ${\mathcal{E}^k}$, then
 $$|\Y\cap Z_\pm|\leq 2^{N-1}-K(N,k+1)\leq 2^{N-1}(1-2/|B_{N,k+1}|)$$ and $|\Y|\leq |\Delta_k|$.
Furthermore, $$|\Y|\leq 2^N - 2K(N,k+1)\leq 2^N (1-2/|B_{N,k+1}|),$$ since $\X$ is disjointly covered by the two sets $Z_+$ and $Z_-$.
\end{proposition}
\begin{Proof} See Appendix. \end{Proof}

\begin{example}
When $\X=\{0,1\}^4$, by Proposition~\ref{coroese} any $S$-set  of $\Ecal^2$  intersects $Z_+$, or $Z_-$, at most at $8-2=6$ points.
This bound is attained exactly, in view of Example~\ref{polyex}.
\end{example}

It is worthwhile mentioning that, if a collection of index sets $\Delta\subseteq2^{[N]}$ is symmetric with respect to a permutation $\pi:[N]\to [N]$, then the convex support $Q_\Delta$ of the associated exponential family, also has this symmetry. In particular, if $\Y$ is an $S$-set of $\mathcal{E}^k$, then $\pi(\Y):=\{(x_{\pi(1)},\ldots,x_{\pi(N)}):x\in\Y\}$ is also an $S$-set, for any permutation $\pi$.
Furthermore, we have:
\begin{proposition}\label{unarelacionsobredist}
If $\mathcal{E}$ is an exponential family with sufficient
statistics
$$A=((-1)^{|\supp(x)\cap\lambda|})_{\lambda\in\Delta,x\in\X},\ \Delta\subseteq2^{[N]},\ \Xcal=\{0,1\}^N,$$ then $\Y$ is an
$S$-set if and only if  $x*\Y:=\{x+y \mod 2: y\in\Y \}$ is an
$S$-set $\forall\, x\in\X$, and moreover,   $\Y\subseteq\X$ is a
facial set if and only if $x*\Y$ is a facial set $\forall\,
x\in\X$.
\end{proposition}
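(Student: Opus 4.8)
The plan is to show that the XOR-translation $y\mapsto x+y$ on the sample space is induced, in the column space $\mathbb{R}^d$ of characters, by a nonsingular linear map that permutes the columns of $A$ and is therefore a symmetry of the convex support $Q$. The key computation is the multiplicativity of the characters. Fix $x\in\X$. Writing $\supp(x+y)=\supp(x)\triangle\supp(y)$ and using $|S\triangle T|\equiv|S|+|T|\pmod 2$ after intersecting with $\lambda$, I obtain for every $\lambda\in\Delta$ and every $y\in\X$ the identity
\[
A_{\lambda,x+y}=(-1)^{|\supp(x+y)\cap\lambda|}=(-1)^{|\supp(x)\cap\lambda|}(-1)^{|\supp(y)\cap\lambda|}=A_{\lambda,x}\,A_{\lambda,y}.
\]
In other words, the column $A_{x+y}$ is obtained from $A_y$ by multiplying each coordinate $\lambda$ by the fixed sign $A_{\lambda,x}\in\{-1,+1\}$. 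Thus $A_{x+y}=D_x A_y$, where $D_x:=\operatorname{diag}\big((A_{\lambda,x})_{\lambda\in\Delta}\big)$ is a diagonal $\pm1$ matrix, in particular a nonsingular linear involution on $\mathbb{R}^d$.

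Next I would record the consequences of this identity. Since $y\mapsto x+y$ is a bijection of $\X$, the map $D_x$ permutes the columns $\{A_y\}_{y\in\X}$ and hence maps $Q=\conv\{A_y\}_{y\in\X}$ onto itself; it is an automorphism of $Q$. Being a nonsingular affine transformation, $D_x$ carries faces to faces and preserves the combinatorial type~\cite[Theorem~3.2.3]{Gruenbaum:2003}. If $\Y=\{z:A_z\in F\}$ is the facial set of a face $F$, then $D_x(F)$ is again a face of $Q$, and since $D_x=D_x^{-1}$ we have $A_z\in D_x(F)\iff A_{x+z}\in F\iff x+z\in\Y\iff z\in x*\Y$, using that $x$ is its own inverse under addition mod $2$. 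Hence the facial set of $D_x(F)$ is exactly $x*\Y$, which proves the facial assertion (for all $x\in\X$).

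For the $S$-set equivalence I would invoke Lemma~\ref{sproperty}: $\Y$ is an $S$-set iff $\Y$ is facial and $\conv\{A_y\}_{y\in\Y}$ is a $(|\Y|-1)$-simplex. We have just shown that $D_x$ induces a bijection of facial sets sending $\Y$ to $x*\Y$, so faciality is preserved in both directions. Moreover $D_x$ is a linear isomorphism, so it preserves affine independence and affine dimension; thus $\conv\{A_y\}_{y\in\Y}$ is a $(|\Y|-1)$-simplex iff its image $\conv\{A_z\}_{z\in x*\Y}$ is a $(|x*\Y|-1)$-simplex. Combining the two, $\Y$ is an $S$-set iff $x*\Y$ is an $S$-set, giving the first assertion.

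I expect the only real content to be the multiplicativity identity $A_{\lambda,x+y}=A_{\lambda,x}A_{\lambda,y}$; once $D_x$ is identified as a $\pm1$ diagonal symmetry of $Q$, everything reduces to the standard invariance of faces, combinatorial type, and affine independence under nonsingular linear maps, together with the characterization in Lemma~\ref{sproperty}. The only bookkeeping to handle carefully is the equivalence $x+z\in\Y\iff z\in x*\Y$, which is immediate from $x+(x+z)=z$ over $\mathbb{F}_2$.
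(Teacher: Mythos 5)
Your proof is correct and follows essentially the same route as the paper: the core content in both is the character multiplicativity $A_{\lambda,x+y}=A_{\lambda,x}A_{\lambda,y}$, which identifies the translation as the diagonal $\pm1$ symmetry $\operatorname{diag}(A(\Delta,x))$ of the column configuration, preserving both rank (the simplex condition of Lemma~\ref{sproperty}) and faciality. The only cosmetic difference is that the paper transports the supporting functional $c\mapsto\operatorname{diag}(A(\Delta,x))\,c$ explicitly, whereas you invoke the general invariance of faces under a nonsingular affine automorphism of $Q$; these are the same argument.
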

\begin{Proof}See Appendix. \end{Proof}

\bigskip

The sets $\{x*\Y\}_{x\in\X}$ are not necessarily all different
from each other, but they are if $|\Y|$ is odd, or if $\Y$ is a
Hamming ball. The orbit $\{x*z:x\in\X\}$ of any $z\in\X$, covers
$\X$. In particular, $\cup_{x\in\X} x*\Y=\X$ and $|x*\Y|=|\Y|$ for
any $x\in\X$, $\Y\subseteq\X$, $\Y\neq\emptyset$. These
observations have interesting relations to coding theory; for
example,  any binary hierarchical model has a convex support
which is the convex hull of a binary linear code,
see~\cite{kahle:2009}.

\section*{Appendix}

\noindent P\,r\,o\,o\,f\,  o\,f  \, L\,e\,m\,m\,a\, \ref{mixturedecompk}.\\[1mm]
1. Let $\{\Y_i\}_{i=1}^m$ be an $S$-set covering of $\X$. W.l.o.g.
$\Y_i\cap\Y_j=\emptyset\;\forall\, i\neq j$. Any
$p\in\overline{\mathcal{P}}$ can be written as $\sum_{i=1}^m
\alpha_i f_i$ and $f_i\in\overline{\mathcal{E}}$ choosing $f_i$
with $\supp(f_i)\subseteq\Y_i$,
$f_i=p|_{\Y_i}/\sum_{x\in\Y_i}p(x)$ and
$\alpha_i=\sum_{x\in\Y_i}p(x)$. This shows
$\operatorname{Mixt}^m(\overline{\mathcal{E}})=\overline{\mathcal{P}}$.

For strictly positive distributions: The convexity of $\Pcal$
implies $\operatorname{Mixt}^m(\mathcal{E})\subseteq\mathcal{P}$
for all $m\geq1$. The direction ``$\supseteq$'' is a bit more
elaborate. By the first part of this proof  the set
$\Mixt^m(\Ecal)$ is dense in $\overline{\Pcal}$; we need to show
that within $\overline{\Pcal}$ only the boundary
$\partial\Pcal:=\overline{\Pcal}\setminus\Pcal$ is  not contained
in $\Mixt^m(\Ecal)$. We use topological arguments. Let
$Y_i:=\overline{\mathcal{P}}(\Y_i)$, $i=1,\ldots,m$ be disjoint
faces of $\overline{\Pcal}$ containing all point measures
$\{\delta_x\}_{x\in\X}$.
Let $p_\eta:=(A|_{\overline{\E}})^{-1}(\eta)$ denote the distribution in $\overline{\Ecal}$ with expectation parameter $\eta$.
The mixture map $\phi: D:=\overline{\mathcal{P}}_m\times(\times_{i=1}^m Q) \to \overline{\mathcal{P}}\;;\; (\alpha,\eta_1,\ldots,\eta_m)\mapsto\sum_{i=1}^m \alpha(i) p_{\eta_i}$ is surjective.
Restricting the domain $D$ to the subset $C:=\partial (\mathcal{P}_m\times(\times_{i=1}^m (A\cdot Y_i)))$ we get a  continuous bijection
$\phi|_C\colon C\to\partial{\mathcal{P}}$ between the compact domain $C$ and the Hausdorff codomain $\partial{\mathcal{P}}$. Therefore, $\phi|_C$ is a homeomorphism and induces isomorphisms between the homotopy groups of $C$ and those of $\partial\mathcal{P}\simeq S^{|\X|-2}$, the $(|\X|-2)$-sphere.
Denote by $\mathring{D}$ the relative interior of the polytope $D$. Note that $\phi(\mathring{D})\subseteq\mathcal{P}$.
For any $\epsilon>0$ there is a continuous deformation $C\to \tilde C\subseteq \mathring{D}$ which is mapped by $\phi$ into a continuous deformation $\partial \mathcal{P} \to \phi(\tilde C) \subset \mathcal{P}\setminus\mathcal{P}^\epsilon$, $\mathcal{P}^\epsilon:=\{p\in\mathcal{P}\colon p(x)\geq \epsilon\;\forall\, x\in\X\}$. If $\phi(\mathring{D})$ does not contain $\mathcal{P}^\epsilon$, then $\phi(\tilde C)$ is not contractible in $\phi(\mathring{D})$, in contradiction to the contractibility of $\mathring{D}$ (which is a convex set). Since any element of $\mathcal{P}$ belongs to some $\mathcal{P}^\epsilon$, this shows $\Mixt^m(\E)\supseteq\mathcal{P}$.\\
\indent
2. Consider some $p\in\overline{\E'}$ with $\supp(p)=\Z\in \F(\E')$.
If $p$ is written as a mixture of elements from $\overline{\E}$, then every mixture component with positive mixture weight must have a support $\Y\in\F(\E)$, $\Y\subseteq\Z$. Furthermore, the union of the support sets of these summands must equal $\Z$. The minimal number of summands is, by definition, equal to $\kappa_\E^f(\Z)$.
\hfill$\Box$
\\

\noindent P\,r\,o\,o\,f\,  o\,f  \, P\,r\,o\,p\,o\,s\,i\,t\,i\,o\,n\, \ref{pentag}.
Consider any exponential family $\Ecal$, and assume (without loss of generality) that the sufficient statistics contains the row $\mathds{1}$.
The image of the moment map $\pi\colon p\mapsto A\cdot p$ is the convex support $Q=\conv \{A_x\}_{x\in\X}$.
Since $\pi$ is continuous, $\overline{\E}$ is compact, and $Q$ is Hausdorff, this bijective map is in fact a homeomorphism.
We denote by $p_\eta=(\pi|_{\overline{\E}})^{-1}(\eta)$ the unique preimage of $\eta\in Q$ by the moment map restricted to $\overline{\Ecal}$.
The $m$-mixture of $\oE$ is parametrized by a {\em mixture map} $\phi\colon D:=\oP_m\times Q^m \to \overline{\Pcal}\;;\; (\alpha,\eta_1,\ldots,\eta_m)\mapsto\sum_{i=1}^{m} \alpha_i p_{\eta_i}$.
Consider the {\em normal space}  $\N=\ker A$ of $\E$.
For any $p\in\oP$ the set $\N_p:=\{q\in\oP: p-q\in\N\}$ is a polytope of dimension $\dim \ker A$ which intersects $\oE$ at a unique point $p_\E\in\oE\cap\N_p$ (see~\cite[Theorem~2.16]{Rauh11:Thesis}).
Hence $\oP=(\oE+\ker A)\cap\oP= {\dotcup}_{ p\in\oE}\N_{p}$.
The boundary of $\N_p$ is contained in the boundary of $\Pcal$.

In the case of $\Ecal_{\mysymbolpentagon}$ $\dim\ker A = 2$.
Furthermore, any subset of $\X=\{0,1,2,3,4\}$ of cardinality $4$
is contained in the union of two $S$-sets. Hence
$\Mixt^2(\oE)\supset\partial\Pcal:=\oP\setminus {\Pcal}$, and the
restriction $\phi|_C\colon C:=\partial (\oP_2\times Q^2)
\to\partial{\Pcal}$ is a continuous surjection. Now, for any
$p\in\E_{\mysymbolpentagon}$ we consider the set $B_p =
\phi^{-1}(\N_p) = \{(\alpha,\eta_1,\eta_2)\in D
\colon\sum_{i=1}^2\alpha_i \eta_i=\pi (p)\}$. This set is mapped
by $\phi$ into the set of convex combinations of $2$ elements of
$\oE_{\mysymbolpentagon}$ which have the same expectation
parameter as $p$. We consider also $\partial B_p = B_p\cap
(\oP_2\times (\partial Q)^2)$, which corresponds to the same kind
of mixtures, but with mixture components from the boundary
$\partial\E_{\mysymbolpentagon}:=\overline{\E_{\mysymbolpentagon}}\setminus\E_{\mysymbolpentagon}$.
We have that $\phi\colon \partial B_p\to\partial\N_p$ is
surjective and has degree $2!$ (the cardinality of the preimage of
a regular value, which arises from the freedom to permute the
mixture components). The set $\partial B_p$ is parametrized by an
angle, say $\gamma$, and  $\phi|_{\partial B_p}(\gamma)$
circulates $\partial\N_p$ twice. Using that $B_p$ is contractible,
it follows that $\phi|_{B_p}=\N_p$ and
$\Mixt^2(\overline{\E_{\mysymbolpentagon}})=\oP$. For strictly
positive distributions the claim follows from the fact that
$\phi(\mathring{B}_p)\subseteq\Pcal$, and that the image of  an
$\varepsilon$-retraction of $B_p$, $(1-\varepsilon)(B_p -p)+p$,
can be made such that it contains any $\delta$-retraction of
$\N_p$, $(1-\delta)(\N_p-p)+p$.
\hfill $\Box$
\\

\noindent P\,r\,o\,o\,f\,  o\,f  \, L\,e\,m\,m\,a\, \ref{cyclic}.
By Lemma~\ref{sproperty} $\Y$ is not an $S$-set $\Leftrightarrow$ $\exists m\in\ker A\setminus\{0\}$ with $\supp(m^+)\subseteq\Y$.
If $\supp(m^+)=\Y$, then $\Y$ is not facial, see~\cite{geiger:2006,RKA10:Support_Sets_and_Or_Mat}.
Consider the sufficient statistics
$A\!=\!(\sigma_{\lambda,x})_{\lambda\in\Delta_k,x\in\X}$. The
kernel of this matrix is spanned by the rows of the matrix
$(\sigma_{\lambda,x})_{\lambda\in
2^{[N]}\setminus\Delta_k,x\in\X}$, which can be written as $
(\sigma_{\lambda,x})_{\lambda\in
\Delta_{N-k},x\in\X}\operatorname{diag}(\sigma_{[N],x})_{x\in\X}$.
The row span of $(\sigma_{\lambda,x})_{\lambda\in
\Delta_{N-k},x\in\X}$ contains any function of $(N-k)$ variables,
including the indicator function $\mathds{1}_\Y(x)$ of any
$(k+1)$-cylinder set $\Y$. This corresponds to a kernel element of
$A$ with entries $m(x)=\mathds{1}_\Y(x) \sigma_{[N],x}, x\in\X$
and $\supp(m^+)=Z_+\cap \Y$.

Since not all subsets of $\Y$ are facial, the corresponding face $F$ of the convex support, which has $2^{k+1}$ vertices,  is not a simplex and has dimension less than $2^{k+1}-1$.
By~\cite[Theorem~7.4.3]{Gruenbaum:2003} and the $(2^{k}-1)$-neighborliness of $Q_k$~\cite{kahle:2008}, $F$ is $(2^{k+1}-3)$-simplicial and has dimension less than $2^{k+1}-2$ (otherwise it would be a simplex).
The combinatorial equivalence of $F$ and the cyclic polytope $C(2^{k+1},2^{k+1}-2)$ follows from the fact that
{\em any $2n$-dimensional, $n$-neighborly polytope with $v\leq 2n+3$ vertices is combinatorially equivalent to the cyclic polytope $C(v,2n)$}~\cite[Theorem~7.2.3]{Gruenbaum:2003}.

To complete the proof we use Gale's Evenness Criterion: {\em A
$d$-tuple $V_J\!=\!\{x(t_j)\}_{j\in J}$ $J \subset [v], |J|=d$ of
vertices of $C(v,d)$, spans a facet iff between any two elements
of $J$ there is an even number of elements in $[v]\setminus
J$}~\cite[Theorem~4.7.2]{Gruenbaum:2003}.
In our case $v=2^{k+1}$ and $d=2^{k+1}-2$.
The combinatorial structure of the cyclic polytope is independent of the map $i\mapsto t_i$ and we may choose $I=[v]:=\{1,\ldots,2^{k+1}\}\subset \mymathbb{N}$. The sets $V_J, |J|=2^{k+1}-2$ satisfying the evenness criterion are exactly the complements of pairs $\{i^e, i^o\}\subset [v]$, where $i^e$ is even and $i^o$ is odd. There are ${2^{2k}}$ such pairs, and hence facets.
This is the same as the number of sets respecting the condition on $S$-sets, $\Z\not\supseteq \Y\cap Z_\pm$, shown at the beginning of this proof.
Therefore, all sets $\Z$ with  $\Z\not\supseteq \Y\cap Z_\pm$ correspond to facets of $C(2^{k+1},2^{k+1}-2)$ and are indeed $S$-sets.
\hfill $\Box$
\\

\noindent P\,r\,o\,o\,f\,  o\,f  \, P\,r\,o\,p\,o\,s\,i\,t\,i\,o\,n\, \ref{coroese}.
Let  $\Y$ be any $S$-set of $\mathcal{E}^k$ and let $C$ be any $(k+1)$-dimensional cylinder set.
By Lemma~\ref{cyclic} $|(C\cap Z_\pm) \setminus \Y|\geq 1$.
Therefore, the maximal cardinality of an $S$-set $\Y\subseteq Z_\pm$ is upper bounded by $|Z_\pm|-K(N,k+1)$, where $K(N,k+1)$ is the smallest cardinality of a set that intersects every $(k+1)$-dimensional cylinder set.
The union of all $(k+1)$-cylinder sets that contain a point $x$ equals the Hamming ball $B_{N,k+1}(x)\subseteq\X$ of radius $k+1$ centered at $x$.
Hence $K(N,k+1)$ is the minimal cardinality of a binary code of covering radius $k+1$.
If $R< N\leq 2R +1$, then $K(N,R)=2$, but in general computing $K(N,R)$ is hard (see~\cite{CohenCoveringCodes}).
A crude lower bound is the {\em sphere-covering bound}: $K(N,R)\geq 2^N / |B_{N,R}|$.
Here $|B_{N,R}|=\sum_{i=0}^R{N\choose i}$.
On the other hand, the cardinality of an $S$-set of $\mathcal{E}^k$ can not exceed $\dim Q_k +1=|\Delta_k|=|B_{N,k}|$, by parameter counting arguments.
\hfill $\Box$
\\

\noindent P\,r\,o\,o\,f\,  o\,f  \, P\,r\,o\,p\,o\,s\,i\,t\,i\,o\,n\, \ref{unarelacionsobredist}.
Consider the sufficient statistics
$A=(\sigma_{\lambda,x})_{\lambda\in\Delta,x\in\Xcal}$. We
abbreviate $(\sigma_{\lambda,x})_{\lambda\in\Delta,x\in\Y}$ by
$\sigma(\Delta, \Y)$. A set $\Y\subseteq\X$ is an $S$-set of
$\Ecal$ if and only if \noindent{\em (i)} $\operatorname{rk}
\sigma(\Delta,\Y)=|\Y|$, (i.\,e., $\Y$ describes a
$(|\Y|-1)$-simplex), and \noindent{\em (ii)} there exists a vector
$c\in\mymathbb{R}^{|\Delta|}$ for which $\langle c,
\sigma(\Delta,y)\rangle=0\;\forall\, y\in\Y$ and $\langle c,
\sigma(\Delta,x) \rangle\geq 1 \;\forall\, x\in \X\setminus\Y$,
(i.\,e., $\Y$ is a facial set). We show that $\Y$ satisfies these
properties if and only if $x*\Y$ does. We have that
\begin{eqnarray*}
\sigma({\lambda,x*y})&=&(-1)^{|(\supp(x){\text{\tiny{$\triangle$}}}\supp(y))\cap\lambda|}
=(-1)^{|\supp(x)\cap\lambda|}(-1)^{|\supp(y)\cap\lambda|}\\
&&\qquad \forall\, x\in\X ,\  \lambda\in 2^{[N]} ,\  y\in\X
\end{eqnarray*}
and thus
$\sigma(\Delta,x*\Y)=\operatorname{diag}\left(\sigma(\Delta,x)
\right)\cdot \sigma(\Delta,\Y)$. Hence $\operatorname{rk}
\sigma(\Delta,\Y)=\operatorname{rk}\sigma(\Delta,x*\Y)$. Consider,
on the other hand, the vector $\tilde
c:=\operatorname{diag}(\sigma(\Delta,x))\cdot c$. We have $\langle
\tilde c,\sigma(\Delta,x*y) \rangle=\langle c,\sigma(\Delta,y)
\rangle=0\;\forall\, y\in\Y$, and  $\langle\tilde
c,\sigma(\Delta,z')\rangle\geq 1$ $\forall\, z'\in \X\setminus
y*\Y$.
\hfill $\Box$
\\

\newpage
\bigskip
\section*{Acknowledgment}
\small I am grateful to Johannes Rauh, Thomas Kahle, and Nihat Ay
for many valuable discussions and comments. Furthermore, I am
grateful to Shun-ichi Amari for valuable discussions. I~thank
Jason Morton for help in proof-reading the manuscript. I~am
grateful to anonymous reviewers for very helpful suggestions. This
work was carried out  mostly at MPI-MIS, Leipzig, Germany; partly
at RIKEN BSI, Hirosawa, Saitama, Japan; and partly at PennState,
supported by DARPA grant FA8650-11-1-7145.

\makesubmdate

\def\P{\mathbb{P}}

\makecontacts

\end{document}